\documentclass[12pt]{amsart}
\usepackage[latin1,utf8]{inputenc}
\usepackage{fancyhdr}
\usepackage{graphicx}
\usepackage{newlfont}
\usepackage{amssymb}
\usepackage{amsmath}
\usepackage{latexsym}
\usepackage{fullpage}
\usepackage{amsthm}
\usepackage{xcolor}
\usepackage{enumitem}
\usepackage{comment}
\usepackage{tikz}
\usepackage{tikz-cd}
\usepackage[T1]{fontenc}
\usepackage{array} 
\usepackage{makecell}
\usepackage{hyperref}
\definecolor{coolblack}{rgb}{0.0, 0.18, 0.39}
\definecolor{indiagreen}{rgb}{0.07, 0.53, 0.03}
\hypersetup{ colorlinks=true, linkcolor=coolblack, citecolor=indiagreen, urlcolor=blue}
\usepackage{cleveref}

\newcommand{\barb}{\bar B}
\newcommand{\bara}{\bar A}

\theoremstyle{plain}
\newtheorem*{thm*}{Theorem}
\newtheorem*{con*}{Conjecture}
\newtheorem{thm}{Theorem}[section]

\newtheorem{lem}[thm]{Lemma}
\newtheorem{con}[thm]{Conjecture}

\newtheorem{cor}[thm]{Corollary}

\newtheorem*{str*}{Strategy}

\theoremstyle{definition}
\newtheorem{defn}[thm]{Definition}

\theoremstyle{remark}

\usepackage{enumitem}

\title{Double shortcuts of standard hypercube decompositions}

\author{Margherita Zannoni}

\date{}

\address{Margherita Zannoni, Dipartimento di Ingegneria e Scienze dell'Informazione e Matematica, Universit\`a degli Studi dell'Aquila, Via Vetoio SNC, 67100 L'Aquila, Italy}

\begin{document}

\begin{abstract}
In this paper, we study the double shortcuts associated with pairs of standard hypercube decompositions of arbitrary Bruhat intervals in the symmetric group. Our results imply that a conjecture stated in [Bull. London Math. Soc., 57 (2025), no. 8] holds for the class of standard hypercube decompositions. If this conjecture were to hold for all hypercube decompositions, then the Combinatorial Invariance Conjecture for Kazhdan--Lusztig polynomials would follow.
\end{abstract}

\maketitle


\section{Introduction}

The Combinatorial Invariance Conjecture, independently formulated by Lusztig and Dyer in the 1980s, is one of the most intriguing open problems in the theory of Kazhdan-Lusztig polynomials. This conjecture asserts that Kazhdan-Lusztig polynomials depend only on the isomorphism type of the Bruhat interval as a poset.

\begin{con}[Combinatorial Invariance]
\label{cic}
Let $W, W'$ be Coxeter groups, $u, v \in W$, $u', v' \in W'$. If $[u, v] \cong [u', v']$ as posets, then $P_{u,v} (q) = P_{u',v'} (q)$ (or, equivalently, $\widetilde R_{u,v} (q) =\widetilde R_{u',v'} (q)$).
\end{con}

The conjecture has been focus of active research for the past forty years and it is known to hold in various partial cases. For instance, it has been proved for intervals of
length $\leq 6$ (\cite{BGL}), for intervals in type $\tilde A_2$ (\cite{BLP}), for intervals that are lattices (\cite{Dyeth}, 7.23, and \cite{Bre94}), and for intervals that start from the identity of the group (\cite{BCM1}). A parabolic version of  Conjecture \ref{cic} holds for intervals starting from the identity (\cite{Mtrans}, \cite{M}), while certain purely poset-theoretic generalizations  of the conjecture fail (\cite{BCM2}, \cite{MJaco}).

In recent years, two new approaches have been proposed to investigate this conjecture: via  hypercube decompositions and via flipclasses. The latter approach (\cite{EM}, \cite{EMS}) has led to the proof of the combinatorial invariance of  Kazhdan--Lusztig $\widetilde{R}$-polynomials of  Weyl groups modulo $q^7$ and of  Kazhdan--Lusztig $\widetilde{R}$-polynomials of type $A$  Weyl groups modulo $q^8$. As a consequence, the Combinatorial Invariance Conjecture holds for all intervals up to length 8 in Weyl groups and up to length 10 in type $A$.

The approach via hypercube decomposition, introduced in \cite{BBDVW} and \cite{DVBBZTTBBJLWHK} and developed  through the use of certain machine learning methods, appears  very promising for tackling the combinatorial invariance problem in the symmetric group.
This technique has led to conjectural recursive description of the $P$-polynomials (\cite{BBDVW}) and of the $\widetilde R$-polynomials (\cite{BM}, and \cite{BG}). The analisys in  \cite{BG} establishes  the conjecture for elementary intervals in symmetric groups.

The description in \cite{BM} is formulated in terms of shortcuts of a hypercube decomposition. A recent work (\cite{EM2}) introduces the notions of amazing hypercube decompositions and, given two amazing hypercube decompositions $z$ and $z'$, that of $(z,z')$-double shortcut. This has led  to a new conjecture implying the Combinatorial Invariance Conjecture for type~$A$. 
Let $DS(z,z')$ denote the multiset of  $(z,z')$-double shortcuts. 

\begin{con}\label{EMtrivial1}
Let $W$ be a Coxeter group of type $A$. The  equivalence relation generated by $z \sim z'$ if $DS(z,z')= DS(z',z)$ is trivial (that is, it has only one equivalence class).
\end{con} 
The stronger symmetry condition that $DS(z,z')= DS(z',z)$ holds for every pair of amazing hypercube decompositions has been verified through computer calculations  up to $A_5$ (see \cite{EM2}).
In the appendix of \cite{EM2}, Barkley and Gaetz prove that Conjecture \ref{EMtrivial1} holds for all co-elementary
 intervals. 

In this paper, we show that all standard hypercube decompositions of a given interval belong to the same equivalence class (Corollary~\ref{cor}). In particular,  Conjecture \ref{EMtrivial1}  holds for  all intervals having no hypercube decompositions apart from the standard ones.

\medskip

Our result provides structural evidence in support of the double-shortcut framework. 



\section{Preliminaries}

We begin by setting the notation and recalling basic definitions. For standard background on Coxeter groups, we refer the reader to \cite{BB}. 

The notion of hypercube decomposition is relevant only in the case of Bruhat intervals of type $A$. So, from now on we let $W$ be the Coxeter group of type $A_{n-1}$, that is, the symmetric group $\mathfrak{S}_n$ with simple transpositions as generators.

We recall the definition of hypercube decomposition of a Bruhat interval of $W$ following \cite{BM}, dually from how it was originally introduced in \cite{BBDVW}. Let $E$ be a finite set. We call \emph{hypercube} the directed acyclic graph $H_E$ whose vertices are the subsets of $E$ and whose edges connect two elements if and only if the second is obtained by adding one element to the first one.
Let $p\in W$. Let $E$ be a set of edges of $\mathcal{B}(W)$ having the same target $p$. We say that $E$ \emph{spans a hypercube} if there exists a unique embedding of directed graphs $\theta_E : H_E \rightarrow \mathcal{B}(W)$ such that, for any $\alpha\in E$, the edge ($E\setminus\left\{\alpha\right\}\to E$) is sent to $\alpha\in \mathcal{B}(W)$. Eventually, we say that $E$ \textit{spans a hypercube cluster} if every subset $E'$ of $ E$ consisting of edges with pairwise incomparable sources spans a hypercube. 
\begin{defn}
Let $u,v\in W$ and $z\in[u,v]$. We say that $z$ is a \textit{hypercube decomposition} of the interval $[u,v]$ provided that:
\begin{enumerate}[label=\roman*.]
\item $[z,v]$ is \emph{diamond complete} with respect to $[u,v]$, meaning that, if there exist $x\in [u,v]$ and $a_1,a_2,y \in [z,v]$, $a_1 \neq a_2$, such that $x\rightarrow a_1\rightarrow y$, $x \rightarrow a_2 \rightarrow y$, then $x\in[z,v]$; 
\item for all $p\in [z,v]$, the set $E^p=\left\{x\to p:x\in X\setminus [z,v]\right\}$ spans a hypercube cluster.
\end{enumerate}
\end{defn}

If $\Gamma = (u = x_0 \to x_1 \to \cdots \to x_r = v)$ is a Bruhat path from $u$ to $v$ in $\mathcal{B}(W)$, then $r$ is referred to as the length of $\Gamma$, denoted $\ell(\Gamma)$, and $\{x_i: i \in [0, r]\}$ is called the support of $\Gamma$, denoted supp$(\Gamma)$. Furthermore, the distance from $u$ to $v$, denoted $d(u, v)$, is $\min\{\ell(\Gamma) : \Gamma \text{ is a path from }u\text{ to }v\}$.

Following \cite{BM}, we give the definitions of shortcuts and $R$-elements.

\begin{defn} 
Let $u,v\in W$ and $z\in[u,v]$.
    The set of \textit{shortcuts} for $[u,v]$ with respect to $z$ is defined as $$W^z_{[u,v]}= \{   p\in [z,v] : \text{supp}(\Gamma)\cap [z,v]=\left\{p\right\} \text{ for all paths } \Gamma\text{ from }u\text{ to } p \text{ of length } d(u,p)  \}.$$
    Moreover, we say that $z$ is an $R$-element for $[u,v]$ if \begin{equation*}\widetilde{R}_{u,v}(q)=\sum_{p\in W^z_{[u,v]}} q^{d(u,p)}\widetilde{R}_{p,v}(q).
\end{equation*}
\end{defn}
Fix $u,v\in W$, with $u\leq v$. The elements $$z_n=\min([u,v]\cap W_{S\setminus \{s_{n-1}\}}v) ,\hspace{5mm} z_1=\min([u,v]\cap W_{S\setminus \{s_{1}\}} v),$$ 
$$z^n=\min([u,v]\cap  v W_{S\setminus \{s_{n-1}\}} ), \hspace{5mm} z^1=\min([u,v]\cap v  W_{S\setminus \{s_{1}\}} )$$ are always  hypercube decompositions (possibly  not mutually distinct) of the interval $[u,v]$. We call these elements the \emph{standard} hypercube decompositions of $[u,v]$. It has been shown that standard hypercube decompositions are $R$-elements (\cite{BM}, Theorem~3.7 and Corollary~3.10).

The following theorem provides us with a combinatorial characterization of the set of shortcuts for a Bruhat interval with respect to standard hypercube decompositions (\cite{BM}, Corollary~4.3 and Remark~4.4). 

\begin{thm}\label{char}Let $u,v\in W$, with $u\leq v$.
\begin{enumerate}[label=\arabic*.]
\item The shortcuts for $[u,v]$ with respect to $z_n$ are precisely the elements of the form $(n,a_k,\ldots,a_2,a_1)u$ for some $1\leq a_1<a_2<\ldots <a_k<n$ such that $v^{-1}(n)=u^{-1}(a_1)<u^{-1}(a_2)<\ldots <u^{-1}(a_k)<u^{-1}(n)$.
    \item The shortcuts for $[u,v]$ with respect to $z_1$ are precisely the elements of the form $(1,a_1,\ldots,a_{k-1},a_k)u$ for some $1<a_1<\ldots<a_{k-1}<a_k\leq n$ such that $u^{-1}(1)<u^{-1}(a_1)<\ldots<u^{-1}(a_{k-1})<u^{-1}(a_k)=v^{-1}(1)$.
    \item The shortcuts for $[u,v]$ with respect to $z^n$ are precisely the elements of the form $(\alpha,a_1,\ldots,a_{k-1},a_k)u$ for some $\alpha<a_1<\ldots<a_{k-1}<a_k\leq n$ such that $u^{-1}(\alpha)<u^{-1}(a_1)<\ldots<u^{-1}(a_{k-1})<u^{-1}(a_k)=n$, where $\alpha=v(n)$.
    \item The shortcuts for $[u,v]$ with respect to $z^1$ are precisely the elements of the form $(\beta,a_k,\ldots,a_2,a_1)u$ for some $1\leq a_1<a_2<\ldots<a_k<\beta$ such that $1=u^{-1}(a_1)<u^{-1}(a_2)<\ldots<u^{-1}(a_k)<u^{-1}(\beta)$, where $\beta=v(1)$.
\end{enumerate}
\end{thm}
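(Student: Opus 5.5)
The plan is to prove part~1 directly and then obtain parts~2--4 from the symmetries of the Bruhat graph of $\mathfrak S_n$.

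\textbf{Step 1: describe $[z_n,v]$.} An element $x\in W_{S\setminus\{s_{n-1}\}}v$ has the form $x=wv$ with $w$ fixing $n$, so $x^{-1}(n)=v^{-1}(n)$. I will first check that
$$[z_n,v]=\{x\in[u,v]: x^{-1}(n)=v^{-1}(n)\}.$$
This is the content of $z_n$ being a well-defined minimum of a coset intersected with an interval, which I take from \cite{BM}. Hence an element $p\in[z_n,v]$ is a shortcut exactly when every geodesic from $u$ to $p$ keeps the value $n$ away from position $v^{-1}(n)$ at every vertex except the last one.

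\textbf{Step 2: edges and the commutation argument.} Edges of $\mathcal B(W)$ are $x\to (a\,b)x$ with $a<b$ and $x^{-1}(a)<x^{-1}(b)$. In one-line notation this swaps the values $a,b$, with the smaller value initially on the left. Only edges whose transposition involves the value $n$ move $n$, and such an edge moves $n$ strictly to the left.

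Let $p$ be a shortcut and $\Gamma$ a geodesic from $u$ to $p$. I claim every step of $\Gamma$ involves $n$. If some step $(a\,b)$ with $a,b\neq n$ occurs, I will try to push it to the end of the path by conjugating it past the $n$-moving steps. Conjugating $(a\,b)$ by a transposition $(c\,n)$ gives again a transposition avoiding $n$, or one with $n$ replaced by $c$. This should produce another path of the same length $d(u,p)$. In the new path $n$ reaches position $v^{-1}(n)$ before the end, at a vertex that is still below $v$, so the vertex lies in $[z_n,v]$. That contradicts the shortcut property.

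\textbf{Main obstacle.} The delicate point is that the rearranged sequence must still be a directed Bruhat path, with each step increasing length, and must stay inside $[u,v]$. I expect to handle this with the tableau criterion for Bruhat order, together with the fact that $n$ only moves left. This is the step I expect to require the most care.

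\textbf{Step 3: the shape of the path.} Once every step involves $n$, the path successively swaps $n$ with values $a_k,\dots,a_1$. This gives $p=(n,a_k,\dots,a_1)u$, where the last swap lands $n$ at $v^{-1}(n)=u^{-1}(a_1)$. Each swap requires $a_i$ to lie left of $n$ at that moment. Geodesicity of $\Gamma$ forces $a_1<\dots<a_k$ and the stated order of positions. Otherwise two consecutive swaps could be replaced by a single transposition, giving a shorter path, since $(n\,a)(n\,b)$ collapses when the values or positions are out of order. Conversely, given such a chain, any path to $p$ must move $n$ from $u^{-1}(n)$ to $u^{-1}(a_1)$. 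I will show $d(u,p)=k$ by counting the left-to-right maxima that must be crossed. Any geodesic then moves $n$ only at its final step, so $p$ is a shortcut.

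\textbf{Step 4: the other three parts.} Part~2 follows from part~1 applied to the automorphism $x\mapsto w_0xw_0$ of $\mathcal B(W)$, which exchanges the values $i\leftrightarrow n+1-i$ and positions $j\leftrightarrow n+1-j$. This map sends $z_n$ to $z_1$ and preserves shortcuts and distances. Parts~3 and~4 follow from parts~1 and~2 via the anti-automorphism $x\mapsto x^{-1}$ of Bruhat order, which swaps left and right cosets, hence $z_n\leftrightarrow z^n$ and $z_1\leftrightarrow z^1$. Translating the cycle conditions through inversion gives the stated forms with $\alpha=v(n)$ and $\beta=v(1)$.
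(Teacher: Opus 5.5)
The paper does not prove this statement; it quotes it from \cite{BM} (Corollary~4.3 and Remark~4.4), so your argument has to stand on its own. Steps~1 and~4 are fine. Step~4 uses the same transports $x\mapsto w_0xw_0$ and $x\mapsto x^{-1}$ that the paper uses in Theorem~\ref{thm11}; note only that inversion is an automorphism of Bruhat order, not an anti-automorphism.

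\textbf{The gap is in Step~2: the claim there is false, not just delicate.} Take $n=3$, $u=e$, $v=w_0=[3,2,1]$, so $[z_3,v]=\{[3,1,2],[3,2,1]\}$. The element $p=[3,1,2]=(3,2,1)u$ is a shortcut, because both of its geodesics $e\to[1,3,2]\to[3,1,2]$ and $e\to[2,1,3]\to[3,1,2]$ meet $[z_3,v]$ only at $p$. Yet the second geodesic starts with $(1\,2)$, which does not involve $n$. Pushing $(1\,2)$ to the end, as you propose, uses $(2\,3)(1\,2)=(1\,2)(1\,3)$. This produces $e\to[3,2,1]\to[3,1,2]$, whose second step goes down. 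So no argument that derives a contradiction from a non-$n$ step can work. Necessity needs a different intermediate statement, e.g.\ that $pu^{-1}$ is a single cycle through $n$, or that \emph{some} geodesic uses only $n$-steps. It also needs an argument that genuinely uses the shortcut property, by exhibiting an alternative geodesic that enters $[z_n,v]$ early.

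\textbf{Step~3 also rests on false justifications.} For $a\neq b$, $(n\,a)(n\,b)$ is a $3$-cycle, so two $n$-steps never collapse into one transposition. Geodesicity alone does not force $a_1<\cdots<a_k$. For $u=[2,1,3]$, the path $[2,1,3]\to[2,3,1]\to[3,2,1]$ is a geodesic, since $w_0u^{-1}$ is a $3$-cycle, and it swaps $3$ first with $1$ and then with $2$, in the wrong order. With $v=w_0$, what excludes $w_0$ from being a shortcut is the other geodesic, $[2,1,3]\to[3,1,2]\to[3,2,1]$, which passes through $[3,1,2]\in[z_3,v]$.

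In the converse, the assertion that any geodesic moves $n$ only at its last step is false: $e\to[1,3,2]\to[3,1,2]$ moves $3$ twice. What you must show is that no intermediate vertex has $n$ in position $u^{-1}(a_1)$. One way to do this:
\begin{enumerate}
\item $d(u,p)=k$ equals the reflection length of $C=pu^{-1}$.
\item Hence every vertex $x$ of a geodesic satisfies $xu^{-1}\le_T C$ in absolute order, i.e.\ it corresponds to a noncrossing partition of $C$.
\item If $x^{-1}(n)=u^{-1}(a_1)$ and $x\neq p$, the cycle of $xu^{-1}$ sending $a_1\mapsto n$ must skip some $a_m$.
\item The tableau criterion then gives $x\not\le p$.
\end{enumerate}

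Finally, the converse must also use $p\le v$, which the displayed conditions do not imply. For $u=e$ and $v=[3,1,2]$, the element $(3\,1)u=[3,2,1]$ satisfies them but lies outside $[u,v]$. The statement has to be read as describing elements of $[z_n,v]$.
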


The following definition, introduced in \cite{EM2}, strengthens the definition of \emph{join} hypercube decomposition originally presented in \cite{BM}. This is a combinatorially defined class of hypercube decompositions. 

\begin{defn}
   Let $u,v \in W$. We say that a  hypercube decomposition $z$ of $[u,v]$ is \textit{amazing} if the intersection $[z,v]\cap [x,v]$ has a minimum $z\vee x$ for all $x \in [u,v]$, and this minimum $z \vee x$ is a (necessarily  amazing) hypercube decomposition of $[x,v]$. Moreover, we say that $z$ is an \textit{amazing $R$-element} for $[u,v]$ if $z \vee x$ is an $R$-element for $[x,v]$ for all $x \in [u,v]$.
\end{defn}

Standard hypercube decompositions are amazing hypercube decompositions are amazing $R$-elements (see \cite{EM2}, Remark~3.2)

Whereas shortcuts relate to a single element in a Bruhat interval, double shortcuts, introduced in \cite{EM2}, are associated with an ordered pair of amazing hypercube decompositions.


\begin{defn}
    Let $u,v \in W$. Let $z$ and $z'$ be amazing hypercube decompositions of $[u,v]$. We say that an element $b$ in $[z,v]\cap [z',v]$ is a $(z,z')$-\emph{double shortcut} for $[u,v]$ if there exists $p\in W^z_{[u,v]}$ such that $b\in W^{z'\vee p}_{[p,v]}$. Denote by $DS(z,z')$ the  multiset 
$$\left\{(d(u,p) + d(p,b) , b ) : p\in W^z_{[u,v]} \text{ and } b\in W^{ z'\vee p}_{[p,v]}  \right\}.$$
\end{defn}

Fix $u,v \in W$. Following \cite{EM2}, we consider an equivalence relation on the set of amazing hypercube decompositions of $[u,v]$ defined as the transitive closure of the relation $z \sim z'$ if $DS(z,z')= DS(z',z)$. This led to the formulation of the following two conjectures (\cite{EM2}, Conjecture~3.6 and Conjecture~3.7). Conjecture~\ref{EMoneeqclass} is a weakening of Conjecture~\ref{EMtrivial}, but both conjectures imply the Combinatorial Invariance Conjecture.

\begin{con}\label{EMtrivial}
Let $W$ be a Coxeter group of type $A$. The  above relation is trivial (that is, it has only one equivalence class).
\end{con} 

\begin{con}\label{EMoneeqclass}
Let $W$ be a Coxeter group of type $A$. 
Every equivalence class of the above relation contains an amazing $R$-element.
\end{con}

Up to $A_5$, computer calculations have verified that Conjecture~\ref{EMtrivial} holds. Indeed, a much stronger statement has been
verified, that is, $$DS(z,z')=DS(z',z)$$ for all amazing hypercube decompositions of any interval in Coxeter groups of type $A_{n-1}$, for $n\leq 6$.

Our result supports Conjecture~\ref{EMtrivial}, proving that, for any interval in Coxeter groups of type $A_{n-1}$, all standard hypercube decompositions lie in the same equivalence class.


\section{Double shortcuts of standard hypercube decompositions}

From now on, $W$ always denotes the Coxeter group of type $A_{n-1}$ and $u,v\in W$ are fixed. 

Theorem~\ref{thm11} establishes that the multisets of double shortcuts associated with some pairs of standard hypercube decompositions are symmetric. Its proof reduces to a combinatorial statement, which we state separately as Lemma~\ref{lemma biezione1}.

We start by considering the pair $z_n$ and $z_1$. By Theorem~\ref{char}, for $p,b \in W$, we have that  $p\in W^{z_n}_{[u,v]}$ and $b\in W^{z_1\vee p}_{[p,v]}$ if and only if 
$p=A\cdot u$ and $b=B\cdot p\leq v$, where $A=(n,a_k, a_{k-1}, \ldots, a_1)$ and $B=(1,b_1,b_{2}\ldots,b_h)$ are cycles satisfying:
\begin{enumerate}[label=(\arabic*)]
\item \label{shzw1}
$1\leq a_1<a_2<\ldots <a_k<n$,
\item \label{shzw2}
$v^{-1}(n)=u^{-1}(a_1)<u^{-1}(a_2)<\ldots <u^{-1}(a_k)<u^{-1}(n)$,
\item\label{bshzw1} $1<b_1<\ldots<b_{h-1}<b_h < n$,
\item\label{bshzw2} $u^{-1}\cdot A^{-1}(1)<u^{-1}\cdot A^{-1}(b_1)<\ldots<u^{-1}\cdot A^{-1}(b_{h-1})<u^{-1}\cdot A^{-1}(b_h)=v^{-1}(1)$. 
\end{enumerate}
Notice that $b_h \neq n$, since $b\in[p,v]$ implies 
$u^{-1}\cdot B^{-1}\cdot A^{-1}(n)=u^{-1}\cdot A^{-1}(n)=v^{-1}(n)$.
On the other hand,  $a_1$ might be $1$. 

Symmetrically, given $q,b \in W$, we have that  $q\in W^{z_1}_{[u,v]}$ and $b\in W^{z_n\vee q}_{[q,v]}$ if and only 
$q=\bar{B}\cdot u$ and $b=\bar{A}\cdot q$, where   $\bar{B}=(1,\bar b_1,\bar b_{2}\ldots,\bar b_{\bar h})$ and $\bar{A}=(n,\bar a_{\bar k},\bar a_{\bar k-1}, \ldots,\bar a_1)$ are cycles satisfying:
\begin{enumerate}[label=(\alph*)]
\item \label{mshzw1}
$1<\bar b_1<\ldots<\bar b_{\bar{h}-1}<\bar b_{\bar h}\leq n$,
\item \label{mshzw2}
$u^{-1}(1)<u^{-1}(\bar b_1)<\ldots<u^{-1}(\bar b_{{\bar h}-1})<u^{-1}(\bar b_{\bar h})=v^{-1}(1)$. 
\item \label{mbshzw1}
$1\leq \bar a_1<\bar a_2<\ldots <\bar a_{\bar k}<n$,
\item \label{mbshzw2}
$v^{-1}(n)=u^{-1}\cdot \barb^{-1}(\bar a_1)<u^{-1}\cdot \barb^{-1}(\bar a_2)<\ldots <u^{-1}\cdot \barb^{-1}(\bar a_{\bar k})<u^{-1}\cdot \barb^{-1}(n)$; 
\end{enumerate}
Similarly, $\bar a_1\neq 1 $, since $b\in[q,v]$ implies $u^{-1}\cdot \bara^{-1}\cdot\barb^{-1}(1)=u^{-1}\cdot \barb^{-1}(1)=v^{-1}(1)$, but $\bar b_{\bar h}$ might be $n$.

Set 
\[
\mathcal{D} := \left\{ (A,B) : 
\parbox{7.8cm}{
    $A$ and $B$ are cycles $(n, a_k, a_{k-1},\ldots, a_1)$ and $(1, b_1, b_2,\ldots, b_h)$ 
    satisfying \ref{shzw1}, \ref{shzw2}, \ref{bshzw1}, \ref{bshzw2}
} \right\},
\]
and
\[
\mathcal{\bar D} := \left\{ (\barb,\bara) : 
\parbox{8.1cm}{
     $\bar{B}$ and $\bar{A}$ are cycles  $(1,\bar b_1,\bar b_{2},\ldots,\bar b_{\bar h})$ and $(n,\bar a_{\bar k}, \bar a_{\bar k-1}, \ldots,\bar a_1)$ satisfying \ref{mshzw1}, \ref{mshzw2},  \ref{mbshzw1}, \ref{mbshzw2}}
     \right\}.
\]

Given a cycle $C$, we denote by $C_s$ the set of elements moved by $C$.

\begin{lem}\label{lemma biezione1}
There exists a bijection \begin{align*}
    \phi: \hspace{2mm}& \mathcal{D} \rightarrow \mathcal{\bar{D}} \\ &(A,B)\mapsto(\bara,\barb),
\end{align*} such that $B \cdot A=\bara \cdot \barb$ and  $|A_s|+|B_s|=|\barb_s|+|\bara_s|$  for all $(A,B)\in\mathcal{D}$.
\end{lem}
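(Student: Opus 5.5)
The plan is to describe everything in terms of the permutation $w:=u^{-1}$, which lists the values $1,\dots,n$ along positions. Condition \ref{shzw2} says that $a_1<\dots<a_k<n$ occupy increasing positions $w(a_1)<\dots<w(a_k)<w(n)$ with $w(a_1)=v^{-1}(n)$. The cycle $A$ sends $n\mapsto a_k\mapsto\dots\mapsto a_1\mapsto n$. So $p=Au$ is obtained from $u$ by a cyclic rotation of values along an increasing chain ending at $n$. After this rotation, $n$ sits at the target position $v^{-1}(n)$ and each $a_i$ moves one step up the chain. Analogously, $B$ rotates an increasing chain starting at $1$ so that $1$ lands in position $v^{-1}(1)$. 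Thus a pair $(A,B)\in\mathcal D$ is a pair of interlaced chains in the position/value diagram of $u$. The first chain $\mathcal A=\{a_1,\dots,a_k,n\}$ lives in $u$. The second chain $\{1,b_1,\dots,b_h\}$ lives in $p$, and pulled back to $u$ it reads $\{A^{-1}(1),A^{-1}(b_1),\dots\}$. The product $BA$ is then a single permutation, in general one or two cycles. I will compute it explicitly: it is determined by where the two chains intersect.

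The key case split is whether $1\in\mathcal A$ (that is, $a_1=1$) and whether the $B$-chain meets $A_s$. If $A_s\cap B_s=\emptyset$, the cycles commute, and I set $\bar B=B$, $\bar A=A$. Condition \ref{bshzw2} becomes \ref{mshzw2} because $A^{-1}$ fixes the $b_j$. Likewise \ref{mbshzw2} follows from \ref{shzw2} because $\bar B^{-1}$ fixes the $a_i$. If they intersect, I write $A_s\cap B_s$ as a set $\{c_1<\dots<c_m\}$. In the diagram, the chain for $B$ in $p$ uses some shifted entries $a_i$ of $A$. I then reroute: $\bar B$ follows the $B$-chain in $u$ but uses the unshifted values, so the elements taken from $\mathcal A$ are replaced by their $A$-predecessors or successors. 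This possibly includes $n$, which is why $\bar b_{\bar h}=n$ is allowed. Symmetrically, $\bar A$ follows the remaining part of $\mathcal A$ read in $q=\bar B u$. This is exactly where the asymmetry ($a_1=1$ allowed on one side, $\bar b_{\bar h}=n$ on the other) gets exchanged. Concretely, I define $\bar A,\bar B$ as the unique factorization $BA=\bar A\bar B$ with $\bar B$ a cycle through $1$ and $\bar A$ a cycle through $n$ of the prescribed monotone shape. To prove it exists, I factor the product $BA$, viewed as a permutation, by peeling off the cycle containing $1$, using the monotonicity of positions in the chains.

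The steps, in order, are as follows. (i) Compute $BA$ in terms of the chains. (ii) Define $\phi(A,B)=(\bar A,\bar B)$ by the rerouting rule and check that $\bar A\bar B=BA$; this is immediate once both sides are computed on each $c_j$. (iii) Verify \ref{mshzw1}–\ref{mbshzw2} from \ref{shzw1}–\ref{bshzw2}. This uses that positions increase along both chains, together with the endpoint conditions $w(a_1)=v^{-1}(n)$ and $w(A^{-1}(b_h))=v^{-1}(1)$. (iv) Check the count $|A_s|+|B_s|=|\bar A_s|+|\bar B_s|$. Rerouting is a bijection between the shared points, so the total support with multiplicity is preserved. Equivalently, both sides equal the length-type quantity $n-\#\mathrm{cycles}(BA)+2$ minus a correction that is intrinsic to $BA$. (v) Construct the inverse map $\bar{\mathcal D}\to\mathcal D$ by the symmetric rule, swapping the roles of $1$ and $n$ and of left/right. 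Then check that both composites are the identity. Since both directions are defined via the same product, this reduces to the uniqueness of the monotone factorization.

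I expect the main obstacle to be step (iii) in the intersecting case. I must show that the rerouted chains are again increasing in both value and position, and that no constraint such as $\bar a_1\ne1$ fails. My first attempt will be an induction on $|A_s\cap B_s|$, handling one crossing at a time. At each crossing I will use only the local inequalities at that point. Failing that, I would fall back on a case analysis by the relative position of $w(1)$ and $w(n)$ with respect to $v^{-1}(1)$ and $v^{-1}(n)$.
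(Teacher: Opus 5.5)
There is a genuine gap. Your definition of $\phi$ in step (ii), and the reduction of bijectivity in step (v), both rest on the claim that $BA$ has a \emph{unique} factorization $\bar A\bar B$ with $(\bar B,\bar A)\in\bar{\mathcal D}$. This is false, even after fixing $|\bar A_s|+|\bar B_s|$.

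Take $n=5$, $u=e$, $v=[2,5,3,4,1]$, so $v^{-1}(5)=2$ and $v^{-1}(1)=5$.
\begin{itemize}
\item The pairs $((5,3,2),(1,2,3))$ and $((5,4,2),(1,2,4))$ both satisfy (1)--(4). Both have $BA=(1,2,5)$, and both have $|A_s|+|B_s|=6$.
\item Correspondingly, $((1,2,3,5),(5,3))$ and $((1,2,4,5),(5,4))$ both lie in $\bar{\mathcal D}$ with $\bar A\bar B=(1,2,5)$.
\item Moreover, $((5,2),(1,2))$ and $((5,4,3,2),(1,2,3,4))$ lie in $\mathcal D$ with the same product, but with sizes $4$ and $8$.
\end{itemize}
So $DS(z_n,z_1)$ has genuine multiplicities: here $(4,v)$ occurs twice. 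This has three consequences for your plan:
\begin{itemize}
\item ``The unique factorization'' does not define a map.
\item Step (iv) cannot work as stated. $|A_s|+|B_s|$ is not a function of $BA$, so no quantity intrinsic to $BA$ computes it.
\item Here $1$ and $n$ lie in the same cycle of $BA$, so ``peeling off the cycle containing $1$'' produces no factorization at all.
\end{itemize}

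What is missing is an explicit \emph{choice} rule. It must be local enough that its inverse can be written down and the monotonicity conditions checked directly. The paper's rule works as follows:
\begin{itemize}
\item Split $A_s\cap B_s=\{c_1<\dots<c_m\}$ into maximal runs that are consecutive in both cycles.
\item On each run, apply one of four moves. The move is selected by whether $b_{j_1}$ lies left or right of $b_{j_1-1}$ in $u$, and by whether $a_{i_m+1}>b_{j_m+1}$.
\item Each move either removes one element from one cycle and inserts one into the other, or replaces one element of a single cycle by another. This is what makes the size count work.
\item Moves for different runs touch disjoint entries, so they commute.
\item The inverse is the same rule conjugated by $w_0$, and $\psi\circ\phi=\mathrm{id}$ is verified case by case.
\end{itemize}
In the example, this rule sends $((5,3,2),(1,2,3))\mapsto((1,2,3,5),(5,3))$: it keeps $b_1=2$ in $\bar B$ and appends $n$.

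By contrast, the naive reading of your rerouting is $\bar B=(1,A^{-1}(b_1),\dots,A^{-1}(b_h))=(1,3,5)$. This forces $\bar A=BA\,\bar B^{-1}=(5,3,2)$, which violates (d). So the case analysis you list as a fallback is not optional; it is the substance of the proof.
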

\begin{proof}
Let $(A,B) \in \mathcal{D}$. We define $\phi(A,B)=(\bar{B},\bar{A})$ case-by-case. For each case, the following conditions hold:
\begin{enumerate}[label=(\roman*)]
\item
\label{m1}
$\bar{B}$ and $\bar{A}$ are cycles   satisfying \ref{mshzw1}, \ref{mshzw2},  \ref{mbshzw1}, \ref{mbshzw2};
\item \label{m2}
$B \cdot A=\bara \cdot \barb$;
\item \label{m3}
$|A_s|+|B_s|=|\bar{B}_s|+|\bar{A}_s|$.
\end{enumerate}
After that, we will show that $\phi$ is invertible and that its inverse satisfies symmetric properties. 
Let $A=(n,a_k, a_{k-1}, \ldots, a_1)$, $B=(1,b_1,b_{2}\ldots,b_h)$, and denote $a_0:=n$ and $b_{0}:=1$.

\underline{Definition of $\phi$:}

If $|A_s\cap B_s|=0$, we let $\bar{B}=B$ and $\bar{A}=A$. Since $A$ and $B$ commute, conditions \ref{m1}, \ref{m2}, and \ref{m3} are immediate.

If $|A_s\cap B_s|=m$, for some $m>0$, we let $\{c_1,c_2,\ldots,c_m\}$ be the elements of $A_s\cap B_s$ indexed in non-decreasing order. Then  $c_l=a_{i_l}=b_{j_l}$ for some $i_l\in \{1,\ldots,k\}$, $j_l\in\{0,1,\ldots,h\}$ such that $i_1<i_2<\ldots<i_m$ and $j_1<j_2<\ldots<j_m$. 
\newline
We start by assuming that \begin{equation*}
\label{distanza}\tag{$\ast$}
i_{r+1}-i_r=1 \hspace{3mm}\text{ and }\hspace{3mm}
j_{r+1}-j_r= 1 \hspace{3mm} \text{ for all } \hspace{3mm} r\in\{1,\ldots,m-1\}.
\end{equation*}
Notice that, in the one-line notation of $u$, the elements $a_1,a_2,\ldots,a_k$ must be in this order. The elements $b_1,b_2,\ldots,b_h$ must appear in order as well, except for $b_{j_1}$. In fact, it may happen that $b_{j_1}$ is to the left of $b_{j_1-1}$, since $b_{j_1}\in A_s$ and therefore it will be moved to the right when cycle $A$ is applied to $u$. The same cannot happen for $b_{j_l}$, for $l\in\{2,\ldots,m\}$, even though $b_{j_l}\in A_s$, because $b_{j_l}=a_{i_l}$.
\newline
Hence, we distinguish two main cases depending on whether $b_{j_1}$ lies to the right or to the left of $b_{j_1-1}$ in the one-line notation of $u$.  Overall, there are four cases, as each main case splits into two sub-cases depending on whether $a_{i_m+1}$ is greater than $b_{j_m+1}$. If $j_m=h$, then $b_{j_m+1}$ is undefined; in this instance, we adopt the convention to be in the case $a_{i_m+1}<b_{j_m+1}$ and then let $b_{j_m+1}:=1$. If $i_m=k$, we just let $a_{k+1}:=n$. 
\begin{itemize}
        \item If $u^{-1}(b_{j_1})>u^{-1}(b_{j_1-1})$, then the one-line notation of $u$ is of the form
        $$u=[ \hspace{0.7mm}\cdots \hspace{0.7mm}b_{j_1-1}\hspace{0.7mm}\cdots \hspace{0.7mm}\hspace{0.7mm}\substack{a_{ i_1}\\  \parallel \\b_{j_1}}\hspace{0.7mm}\cdots\hspace{0.7mm}\substack{a_{ i_2}\\  \parallel \\b_{j_2}}\hspace{0.7mm}\cdots \cdots\hspace{0.7mm} \substack{a_{ i_m}\\  \parallel \\b_{j_m}}\hspace{0.7mm}\cdots \hspace{0.7mm} a_{i_m+1}\hspace{0.7mm}\cdots\hspace{0.7mm}].$$
        \begin{enumerate}
            \item\label{caso1m} If $a_{i_m+1}>b_{j_m+1}$, let 
            $$\barb=B\hspace{5mm} \text{and} \hspace{5mm}\bara=(a_{i_m},b_{j_m+1})\cdot(a_{i_1},a_{i_{1}-1})\cdot A.$$
            This means that the cycle $\bara$ is equal to $A$ but with $a_{i_1}$ removed and $b_{j_m+1}$ added between $a_{i_m+1}$ and $a_{i_m}$.
            \item\label{caso2m} If $a_{i_m+1}<b_{j_m+1}$, let $$\barb=(b_{j_m+1},a_{i_m+1})\cdot B\hspace{5mm} \text{and} \hspace{5mm}\bara=(a_{i_1},a_{i_{1}-1})\cdot A.$$
            In this case, the cycle $\barb$ is equal to $B$ but with the element $a_{i_m+1}$ added between $b_{j_m}$ and $b_{j_m+1}$ and $\bara$ is obtained from $A$ by removing the element $a_{i_1}$.
    \end{enumerate}
    \item If $u^{-1}(b_{j_1})<u^{-1}(b_{j_1-1})$, then the one-line notation of $u$ will be of the form
        $$u=[ \hspace{0.7mm}\cdots \hspace{0.7mm}\substack{a_{ i_1}\\  \parallel \\b_{j_1}}\hspace{0.7mm}\cdots \hspace{0.7mm}\hspace{0.7mm}b_{j_1-1}\hspace{0.7mm}\cdots\hspace{0.7mm}\substack{a_{ i_2}\\  \parallel \\b_{j_2}}\hspace{0.7mm} \cdots\hspace{0.7mm}\substack{a_{ i_3}\\  \parallel \\b_{j_3}}\hspace{0.7mm} \cdots \cdots\hspace{0.7mm} \substack{a_{ i_m}\\  \parallel \\b_{j_m}}\hspace{0.7mm}\cdots\hspace{0.7mm} a_{i_m+1}\hspace{0.7mm}\cdots\hspace{0.7mm}].$$        
        \begin{enumerate}[resume]
            \item\label{caso3m} If $a_{i_m+1}>b_{j_m+1}$, let $$\barb=(b_{j_1},b_{j_2})\cdot B \hspace{5mm}\text{and} \hspace{5mm}\bara=(a_{i_m},b_{j_m+1})\cdot A,$$
            where $b_{j_2}$ is replaced by $b_{j_1+1}$ if $m=1$. The cycle $\barb$ is obtained from $B$ by removing the element $b_{j_1}$ and $\bara$ from $A$ by adding $b_{j_m+1}$ between $a_{i_m+1}$ and $a_{i_m}$.
            \item\label{caso4m} If $a_{i_m+1}<b_{j_m+1}$, let $$\barb=(b_{j_m+1},a_{i_m+1})\cdot(b_{j_1},b_{j_2})\cdot B\hspace{5mm} \text{and} \hspace{5mm}\bara=A,$$
            where $b_{j_2}$ is replaced by $b_{j_1+1}$ if $m=1$. Here, $\barb$ is equal to $B$ but with $b_{j_1}$ removed and $a_{i_m+1}$ added between $b_{j_m}$ and $b_{j_m+1}$.
        \end{enumerate}       
\end{itemize}
Now, we drop hypothesis \eqref{distanza}. Let $l_1,\ldots,l_t\in\{1,\ldots,m-1\}$ be all the indexes such that $$i_{l_r+1}-i_{l_r}>1 \hspace{5mm}\text{or}\hspace{5mm} j_{l_r+1}-j_{l_r}>1,$$ for all $r\in\{1,2,\ldots,t\},$ 
then we consider the blocks $$\{c_1,\ldots,c_{l_1}\},\{c_{l_1+1},\ldots,c_{l_2}\},\ldots,\{c_{l_t+1},\ldots,c_m\}$$ separately. In other words, we consider the least fine partition of $\{c_1,c_2,\ldots,c_m\}$ such that in each block hypothesis \eqref{distanza} holds. The idea is to look at the position of $b_{j_{l_{r-1}+1}}$ in the one-line notation of $u$ with respect to that of $b_{j_{l_{r-1}+1}-1}$, for $r\in\{1,\ldots,t\}$ one at a time, and to blend the results obtained for $t=0$, each time depending on whether $a_{i_{l_r}+1}$ is greater than $b_{j_{l_r}+1}$.
\newline
More formally, we start by considering the first block. We apply to the cycles $A$ and $B$ the transformations corresponding to the case in which we are depending on the relative position of $b_{j_1}$ and $b_{j_1-1}$ in the one-line notation of $u$ and on whether $a_{i_{l_1}+1}$ is greater than $b_{j_{l_1}+1}$. We let $A_1$ and $B_1$ be the resulting cycles. Then, we do the same for the second block: we apply to $A_1$ and $B_1$ the transformations corresponding to the case in which we are depending on the relative position of $b_{j_{l_1+1}}$ and $b_{j_{l_1+1}-1}$ in the one-line notation of $u$ and on whether $a_{i_{l_2}+1}$ is greater than $b_{j_{l_2}+1}$. We let $A_2$ and $B_2$ be the resulting cycles. We proceed in the same way for all the $t+1$ blocks. Eventually, we let $\barb=B_{t+1}$ and $\bara=A_{t+1}$.
\newline
Notice that this is possible because each block does not interact with the others, meaning that the involutions corresponding to each block commute with those corresponding to any other block. In fact, according to the construction above, when considering the $r\text{-th}$ block $\{c_{l_{r-1}+1},\ldots,c_{l_r}\}$, the only elements involved in the transformations in the four cases altogether are $$\mathfrak{A}_r=\{a_{i_{l_{r-1}+1}-1},a_{i_{l_{r-1}+1}},a_{i_{l_r}},b_{j_{l_{r}}+1}\} \hspace{5mm}\text{ and }\hspace{5mm} \mathfrak{B}_r=\{b_{j_{l_{r-1}+1}}, b_{j_{l_{r-1}+1}+1},a_{i_{l_{r}}+1},b_{j_{l_r}+1}\}$$
for the cycle $A$ and $B$ respectively. So, if both $i_{l_{r}+1}-i_{l_r}>1$ and $j_{l_{r}+1}-j_{l_r}>1$, we have no problem when we apply the transformations regarding the $r+1\text{-th}$ block $\{c_{l_r+1},\ldots,c_{l_{r+1}}\}$ since $\mathfrak{A}_r\cap\mathfrak{A}_{r+1}=\emptyset$ and $\mathfrak{B}_r\cap\mathfrak{B}_{r+1}=\emptyset$. 
We check the other possibilities. 
\begin{itemize}[label=$-$]
    \item If $i_{l_{r}+1}-i_{l_r}>1$ and $j_{l_{r}+1}-j_{l_r}=1$, then $b_{j_{l_r}+1}=b_{j_{l_r+1}}\in\mathfrak{B}_r\cap\mathfrak{B}_{r+1}$. Though, the one-line notation of $u$ must be of the form 
    $$u=[\hspace{0.7mm}\cdots \hspace{0.7mm}\substack{a_{ i_{l_r}-1}\\  \parallel \\b_{j_{l_r}-1}}\hspace{0.7mm}\cdots \hspace{0.7mm}\hspace{0.7mm}\substack{a_{ i_{l_r}}\\  \parallel \\b_{j_{l_r}}}\hspace{0.7mm}\cdots\hspace{0.7mm}a_{i_{l_r}+1}\hspace{0.7mm}\cdots \hspace{0.7mm} a_{i_{l_r}+2}\hspace{0.7mm} \cdots \cdots\hspace{0.7mm}a_{i_{l_{r}+1}-1}\hspace{0.7mm}\cdots\hspace{0.7mm}\substack{a_{ i_{l_{r}+1}}\\  \parallel \\b_{j_{l_{r}+1}}}\hspace{0.7mm} \cdots\hspace{0.7mm} \substack{a_{ i_{l_{r}+1}+1}\\  \parallel \\b_{j_{l_{r}+1}+1}}\hspace{0.7mm}\cdots\hspace{0.7mm}]$$
    and hence $u^{-1}(b_{j_{l_r+1}-1})=u^{-1}(b_{j_{l_r}})<u^{-1}(b_{j_{l_r+1}})$, which means that the second block can belong to cases \ref{caso1m} and \ref{caso2m} only. Moreover, notice that $b_{j_{l_r}+1}=c_{l_r+1}=a_{i_{l_r+1}}>a_{i_{l_r}+1}$, which means that the first block can belong to cases \ref{caso2m} and \ref{caso4m} only.
    \begin{itemize}
        \item[2.] Assume that, in the first block, we are in case \ref{caso2m}, that is $u^{-1}(b_{j_{l_{r-1}+1}-1})<u^{-1}(b_{j_{l_{r-1}+1}})$ and $a_{i_{l_r}+1}<b_{j_{l_r}+1}$. Then, we let $B_r=(b_{j_{l_r}+1},a_{i_{l_r}+1})\cdot B$ and $A_r=(a_{i_{l_{r-1}+1}},a_{i_{l_{r-1}+1}-1})\cdot A$.
        \begin{itemize}
            \item[1.] If the second block belongs to case \ref{caso1m}, then we let $B_{r+1}=B_r=(b_{j_{l_r}+1},a_{i_{l_r}+1})\cdot B$ and $A_2=(a_{i_{l_{r+1}}},b_{j_{l_{r+1}}+1})\cdot (a_{i_{l_r+1}},a_{i_{l_r+1}-1})\cdot A_r=(a_{i_{l_{r+1}}},b_{j_{l_{r+1}}+1})\cdot (a_{i_{l_r+1}},a_{i_{l_r+1}-1})\cdot(a_{i_{l_{r-1}+1}},a_{i_{l_{r-1}+1}-1})\cdot A $.
            \item[2.] If the second block belongs to case \ref{caso2m}, then we let $B_{r+1}=(b_{j_{l_{r+1}}+1},a_{i_{l_{r+1}}+1})\cdot B_r=(b_{j_{l_{r+1}}+1},a_{i_{l_{r+1}}+1})\cdot(b_{j_{l_r}+1},a_{i_{l_r}+1})\cdot B$ and $A_{r+1}=(a_{i_{l_{r}+1}},a_{i_{l_{r}+1}-1})\cdot A_r=(a_{i_{l_{r}+1}},a_{i_{l_{r}+1}-1})\cdot(a_{i_{l_{r-1}+1}},a_{i_{l_{r-1}+1}-1})\cdot A $.
        \end{itemize}
        \item[4.] Assume that, in the first block, we are in case \ref{caso4m}, that is $u^{-1}(b_{j_{l_{r-1}+1}-1})>u^{-1}(b_{j_{l_{r-1}+1}})$ and $a_{i_{l_r}+1}<b_{j_{l_r}+1}$. Then, we let $B_r=(b_{j_{l_r}+1},a_{i_{l_r}+1})\cdot(b_{j_{l_{r-1}+1}},b_{j_{l_{r-1}+1}+1})\cdot B$ and $A_r=A$.
        \begin{itemize}
           \item[1.] If the second block belongs to case \ref{caso1m}, then we let $B_{r+1}=B_r=(b_{j_{l_r}+1},a_{i_{l_r}+1})\cdot(b_{j_{l_{r-1}+1}},b_{j_{l_{r-1}+1}+1})\cdot B$ and $A_{r+1}=(a_{i_{l_{r+1}}},b_{j_{l_{r+1}}+1})\cdot (a_{i_{l_r+1}},a_{i_{l_r+1}-1})\cdot A_r=(a_{i_{l_{r+1}}},b_{j_{l_{r+1}}+1})\cdot (a_{i_{l_r+1}},a_{i_{l_r+1}-1})\cdot A $.
            \item[2.] If the second block belongs to case \ref{caso2m}, then we let $B_{r+1}=(b_{j_{l_{r+1}}+1},a_{i_{l_{r+1}}+1})\cdot B_r=(b_{j_{l_{r+1}}+1},a_{i_{l_{r+1}}+1})\cdot(b_{j_{l_r}+1},a_{i_{l_r}+1})\cdot(b_{j_{l_{r-1}+1}},b_{j_{l_{r-1}+1}+1})\cdot B$ and $A_{r+1}=(a_{i_{l_{r}+1}},a_{i_{l_{r}+1}-1})\cdot A_r=(a_{i_{l_{r}+1}},a_{i_{l_{r}+1}-1})\cdot A $.
        \end{itemize}
    \end{itemize}
    \item If $i_{l_{r}+1}-i_{l_r}=1$ and $j_{l_{r}+1}-j_{l_r}>1$, then $a_{i_{l_r+1}-1}=a_{i_{l_r}}\in\mathfrak{A}_r\cap\mathfrak{A}_{r+1}$. Though, the one-line notation of $u$ must be of the form $$u=[\hspace{0.3mm}\cdots \hspace{0.3mm}\substack{a_{ i_{l_r}-1}\\  \parallel \\b_{j_{l_r}-1}}\hspace{0.3mm}\cdots \hspace{0.3mm}\hspace{0.3mm}\substack{a_{ i_{l_r}}\\  \parallel \\b_{j_{l_r}}}\hspace{0.3mm}\cdots\hspace{0.3mm}\substack{a_{ i_{l_{r}+1}}\\  \parallel \\b_{j_{l_{r}+1}}}\hspace{0.3mm} \cdots\hspace{0.3mm}b_{j_{l_r}+1}\hspace{0.3mm}\cdots \hspace{0.3mm} b_{j_{l_r}+2}\hspace{0.3mm} \cdots \cdots\hspace{0.3mm}b_{j_{l_{r}+1}-1}\hspace{0.3mm}\cdots\hspace{0.3mm} \substack{a_{ i_{l_{r}+1}+1}\\  \parallel \\b_{j_{l_{r}+1}+1}}\hspace{0.3mm}\cdots\hspace{0.3mm} \substack{a_{ i_{l_{r}+1}+2}\\  \parallel \\b_{j_{l_{r}+1}+2}}\hspace{0.3mm}\cdots\hspace{0.3mm}]$$
    and hence $u^{-1}(b_{j_{l_r+1}-1})>u^{-1}(b_{j_{l_r+1}})$, which means that the second block can belong to cases \ref{caso3m} and \ref{caso4m} only. Moreover, notice that $b_{j_{l_r}+1}<b_{j_{l_r+1}}=a_{i_{l_r+1}}=a_{i_{l_r}+1}$, which means that the first block can belong to cases \ref{caso1m} and \ref{caso3m} only.
    \begin{itemize}
        \item[1.] Assume that, in the first block, we are in case \ref{caso1m}, that is $u^{-1}(b_{j_{l_{r-1}+1}-1})<u^{-1}(b_{j_{l_{r-1}+1}})$ and $a_{i_{l_r}+1}>b_{j_{l_r}+1}$. Then, we let $B_r= B$ and $A_r=(a_{i_{l_r}},b_{j_{l_r}+1})\cdot(a_{i_{l_{r-1}+1}},a_{i_{l_{r-1}+1}-1})\cdot A$.
        \begin{itemize}
            \item[3.] If the second block belongs to case \ref{caso3m}, then we let $B_{r+1}=(b_{j_{l_r+1}},b_{j_{l_r+1}+1})\cdot B_r=(b_{j_{l_r+1}},b_{j_{l_r+1}+1})\cdot B$ and $A_{r+1}= (a_{i_{l_{r+1}}},b_{j_{l_{r+1}}+1})\cdot A_r= (a_{i_{l_{r+1}}},b_{j_{l_{r+1}}+1})\cdot (a_{i_{l_r}},b_{j_{l_r}+1})\cdot(a_{i_{l_{r-1}+1}},a_{i_{l_{r-1}+1}-1})\cdot A $.
            \item[4.] If the second block belongs to case \ref{caso4m}, then we let $B_{r+1}=(b_{j_{l_{r+1}}+1},a_{i_{l_{r+1}}+1})\cdot(b_{j_{l_r+1}},b_{j_{l_r+1}+1})\cdot B_r=(b_{j_{l_{r+1}}+1},a_{i_{l_{r+1}}+1})\cdot(b_{j_{l_r+1}},b_{j_{l_r+1}+1})\cdot B$ and $A_{r+1}=  A_r= (a_{i_{l_r}},b_{j_{l_r}+1})\cdot(a_{i_{l_{r-1}+1}},a_{i_{l_{r-1}+1}-1})\cdot A $.
        \end{itemize}
        \item[3.] Assume that, in the first block, we are in case \ref{caso3m}, that is $u^{-1}(b_{j_{l_{r-1}+1}-1})>u^{-1}(b_{j_{l_{r-1}+1}})$ and $a_{i_{l_r}+1}>b_{j_{l_r}+1}$. Then, we let $B_r= (b_{j_{l_{r-1}+1}},b_{j_{l_{r-1}+1}+1})\cdot B$ and $A_1=(a_{i_{l_r}},b_{j_{l_r}+1})\cdot A$.
        \begin{itemize}
            \item[3.] If the second block belongs to case \ref{caso3m}, then we let $B_{r+1}=(b_{j_{l_r+1}},b_{j_{l_r+1}+1})\cdot B_r=(b_{j_{l_r+1}},b_{j_{l_r+1}+1})\cdot (b_{j_{l_{r-1}+1}},b_{j_{l_{r-1}+1}+1})\cdot B$ and $A_{r+1}= (a_{i_{l_{r+1}}},b_{j_{l_{r+1}}+1})\cdot A_r= (a_{i_{l_{r+1}}},b_{j_{l_{r+1}}+1})\cdot (a_{i_{l_r}},b_{j_{l_r}+1})\cdot A $.
            \item[4.] If the second block belongs to case \ref{caso4m}, then we let $B_{r+1}=(b_{j_{l_{r+1}}+1},a_{i_{l_{r+1}}+1})\cdot(b_{j_{l_r+1}},b_{j_{l_r+1}+1})\cdot B_r=(b_{j_{l_{r+1}}+1},a_{i_{l_{r+1}}+1})\cdot(b_{j_{l_r+1}},b_{j_{l_r+1}+1})\cdot(b_{j_{l_{r-1}+1}},b_{j_{l_{r-1}+1}+1})\cdot B$ and $A_{r+1}=  A_r= (a_{i_{l_r}},b_{j_{l_r}+1})\cdot A $.
        \end{itemize}
    \end{itemize}
    
\end{itemize}
This allows us to conclude that, in any case, the transformations corresponding to some block do not overlap with those of another block, and thus the recipe for computing $\phi(A,B)$ is well-defined.

In this way, we completed the definition of the map $\phi$. 

\underline{Definition of $\psi$:}\\ We now show that it is a bijection by constructing its inverse $\psi:\bar{\mathcal{D}}\to \mathcal{D}$. Let $(\barb,\bara)\in \mathcal{\bar D}$. As for $\phi$, we define $\psi(\barb,\bara)=(\widetilde A,\widetilde B)$ case-by-case.  Let $\barb=(1,\bar b_1,\bar b_2,\ldots,\bar b_{\bar h})$, $\bara=(n,\bar a_{\bar k},\ldots, \bar a_2,\bar a_1)$, and denote $\bar b_{\bar h+1}:=1$ and $\bar a_{\bar k +1}:=n$. For each case, the following conditions hold:
\begin{enumerate}[label=(\roman*')]
    \item\label{c1} $\widetilde A$ and $\widetilde B$ are cycles satisfying \ref{shzw1}, \ref{shzw2},  \ref{bshzw1}, \ref{bshzw2};
\item\label{c2} $\widetilde B \cdot \widetilde A=\bara \cdot \barb$;
\item\label{c3} $|\widetilde A_s|+|\widetilde B_s|=|\bar{B}_s|+|\bar{A}_s|$.
\end{enumerate}

If $|\bara_s\cap\barb_s|=0$, we let $\widetilde A=\bara$ and $\widetilde B=\barb$. Since $\bara$ and $\barb$ commute, conditions \ref{c1}, \ref{c2}, and \ref{c3} are immediate.

If $|\bara_s\cap \bar B_s|=m$, for some $m>0$, we let $\{\bar c_1,\bar c_2,\ldots,\bar c_m\}$ be the elements of $\bar A_s\cap \bar B_s$ indexed in non-decreasing order. Then  $\bar c_l=\bar a_{i_l}=\bar b_{j_l}$ for some $i_l\in \{1,\ldots,\bar k, \bar k+1\}$, $j_l\in\{1,\ldots,\bar h\}$ such that $i_1<i_2<\ldots<i_m$ and $j_1<j_2<\ldots<j_m$. 
\newline
As for the definition of $\phi$, we start by assuming that \begin{equation*}
\label{distanzainv}\tag{$\ast\ast$}
i_{r+1}-i_r=1 \hspace{3mm}\text{ and }\hspace{3mm}
j_{r+1}-j_r= 1 \hspace{3mm} \text{ for all } \hspace{3mm} r\in\{1,\ldots,m-1\}.
\end{equation*}
Notice that, in the one-line notation of $u$, the elements $\bar b_1,\bar b_2,\ldots,\bar b_{\bar h}$ must be in this order. The elements $\bar a_1,\bar a_2,\ldots,\bar a_{\bar k}$ must appear in order as well, except for $\bar a_{i_m}$. In fact, it may happen that $\bar a_{i_m}$ is to the right of $\bar a_{i_m+1}$, since $\bar a_{i_m}\in \barb_s$ and therefore it will be moved to the left when cycle $\barb$ is applied to $u$. The same cannot happen for $\bar a_{i_l}$, for $l\in\{1,\ldots,m-1\}$, even though $\bar a_{i_l}\in \barb_s$, because $\bar a_{i_l}=\bar b_{j_l}$.
\newline
Hence, we distinguish two main cases depending on whether $\bar a_{i_m}$ lies to the left or to the right of $\bar a_{i_m+1}$ in the one-line notation of $u$.  Overall, there are four cases, as each main case splits into two sub-cases depending on whether $\bar b_{j_1-1}$ is greater than $\bar a_{i_1-1}$. If $i_1=1$, then $\bar a_{i_1-1}$ is undefined; in this instance, we adopt the convention to be in the case $\bar b_{j_1-1}>\bar a_{i_1-1}$ and then let $\bar a_{i_1-1}:=n$. If $j_1=1$, we just let $\bar b_0:=1$.
\begin{itemize}     
        \item
        If $u^{-1}(\bar a_{i_m})<u^{-1}(\bar a_{i_m+1})$, then the one-line notation of $u$ is of the form
        $$u=[\hspace{0.7mm}\cdots \hspace{0.7mm} \bar b_{j_1-1}\hspace{0.7mm}\cdots \hspace{0.7mm}\substack{a_{ i_1}\\  \parallel \\b_{j_1}}\hspace{0.7mm}\cdots  \cdots \hspace{0.7mm}\substack{a_{ i_{m-1}}\\  \parallel \\b_{j_{m-1}}}\hspace{0.7mm}\cdots\hspace{0.7mm}\substack{\bar a_{i_m}\\  \parallel \\\bar b_{j_m}}\hspace{0.7mm}\cdots \hspace{0.7mm}\bar a_{i_m+1}\hspace{0.7mm} \cdots \hspace{0.7mm} ].$$
        \begin{enumerate}[label= \arabic*'.]
            \item\label{invcaso1} If $\bar b_{j_1-1}<\bar a_{i_1-1}$, let $\widetilde A=\bara$ and $\widetilde B =(\bar b_{j_1},\bar a_{i_1-1})\cdot(\bar b_{j_m},\bar b_{j_m+1})\cdot \barb$. 
 \item\label{invcaso2} If $\bar b_{j_1-1}>\bar a_{i_1-1}$, let $\widetilde A=(\bar a_{i_1-1},\bar b_{j_1-1})\cdot \bara$ and $\widetilde B=(\bar b_{j_m},\bar b_{j_m+1})$. 
 \end{enumerate}
\item
    If $u^{-1}(\bar a_{i_m})>u^{-1}(\bar a_{i_m+1})$, then the one-line notation of $u$ will be of the form
    $$u=[\hspace{0.7mm}\cdots \hspace{0.7mm} \bar b_{j_1-1}\hspace{0.7mm}\cdots \hspace{0.7mm}\substack{a_{ i_1}\\  \parallel \\b_{j_1}}\hspace{0.7mm}\cdots\cdots \hspace{0.7mm}\substack{a_{ i_{m-2}}\\  \parallel \\b_{j_{m-2}}}\hspace{0.7mm}\cdots  \hspace{0.7mm}\substack{a_{ i_{m-1}}\\  \parallel \\b_{j_{m-1}}}\hspace{0.7mm}\cdots\hspace{0.7mm}\bar a_{i_m+1}\hspace{0.7mm} \cdots \hspace{0.7mm}\substack{\bar a_{i_m}\\  \parallel \\\bar b_{j_m}}\hspace{0.7mm}\cdots \hspace{0.7mm} ].$$
        \begin{enumerate}[resume,label= \arabic*'.]
            \item\label{invcaso3} If $\bar b_{j_1-1}<\bar a_{i_1-1}$, let $\widetilde A=(\bar a_{i_m},\bar a_{i_{m-1}})\cdot \bara$ and $\widetilde B=(\bar b_{j_1},\bar a_{i_1-1})\cdot \barb$, where $\bar a_{i_{m-1}}$ is replaced by $\bar a_{i_m-1}$ if $m=1$.
            \item\label{invcaso4} If $\bar b_{j_1-1}>\bar a_{i_1-1}$, let $\widetilde A=(\bar a_{i_1-1},\bar b_{j_1-1})\cdot (\bar a_{i_m},\bar a_{i_{m-1}})\cdot \bara$ and $\widetilde B=\barb$, where $\bar a_{i_{m-1}}$ is replaced by $\bar a_{i_m-1}$ if $m=1$.
        \end{enumerate}  
\end{itemize}
Notice that this definition is symmetric to that of $\phi$ (see Table \ref{tabella} below). More formally, we have that $$\psi(\barb,\bara)=\mathcal C_{w_0}\circ\phi_{[w_0uw_0,w_0vw_0]}\circ \mathcal{C}_{w_0}(\barb,\bara),$$
where $\mathcal{C}_{w_0}$ is the conjugation by the longest element $w_0$ of $\mathfrak{S}_n$, and $\phi_{[w_0uw_0,w_0vw_0]}$ is the map $\phi$ but associated with the interval $[w_0uw_0,w_0vw_0]$.

Hence, the generalization to the general case in which hypothesis \eqref{distanzainv} does not hold can be done in exactly the same way we did for $\phi$ when we dropped hypothesis \eqref{distanza}.
\begin{table}[h]
    \centering
    \renewcommand{\arraystretch}{1.8} 
    \begin{tabular}{c !{\vrule width 1.2pt} c|c|c|c|c|c|c|c|c|c}
        $\phi$ & $(A,B)$ & $(\barb,\bara)$ & $<$ & $b_{j_1}$ & $b_{j_1-1}$ & $a_{i_m+1}$ & $b_{j_m+1}$ & $a_{i_m}$ & $a_{i_1-1}$ & $b_{j_2}$ \\ 
        \Xhline{1.2pt} 
        $\psi$ &$(\barb,\bara)$&$(\widetilde A,\widetilde B)$& $>$ & $\bar a_{i_m}$ & $\bar a_{i_m+1}$ & $\bar b_{j_1-1}$ & $\bar a_{i_1-1}$ & $\bar b_{j_1}$ & $\bar b_{j_m+1}$ & $\bar a_{i_{m-1}}$ \\  
    \end{tabular}
    \caption{Correspondence between the definitions of $\phi$ and $\psi$}
    \label{tabella}
\end{table}

\underline{Proof that $\psi=\phi^{-1}$:} \\Now, let $(A,B)\in\mathcal  {D}$. We claim that $\psi(\phi(A,B))=(A,B)$. If $A_s\cap B_s=\emptyset$, the claim is trivially verified. If $|A_s\cap B_s|\geq 1$, we prove the validity of the claim under the hypothesis \eqref{distanza}, the other cases being just a generalization. We distinguish the four main cases.
\begin{itemize}[label=-]
    \item If $(A,B)$ lies in case \ref{caso1m}, then $(\barb,\bara)$ lies in case \ref{invcaso4} In fact, $\bar A_s\cap\bar B_s=\{c_2,\ldots,c_m,b_{j_m+1}\}$, $\bar{a}_{i_1-1}=a_{i_1-1}<a_{i_1}=b_{j_1}=\bar b_{j_1-1}$, and $u^{-1}(\bar{a}_{i_m})=u^{-1}(b_{j_m+1})>u^{-1}(a_{i_m+1})=u^{-1}(\bar a_{i_m+1})$. We obtain $\widetilde A=(a_{i_1-1},b_{j_1-1})\cdot (b_{j_m+1},a_{i_{m}})\cdot \bara =(a_{i_1-1},a_{i_1})\cdot (b_{j_m+1},a_{i_{m}})\cdot (a_{i_m},b_{j_m+1})\cdot(a_{i_1},a_{i_{1}-1})\cdot A=A$ and $\widetilde B=\barb=B$.
    \item If $(A,B)$ lies in case \ref{caso2m}, then $(\barb,\bara)$ lies in case \ref{invcaso2} In fact, $\bar A_s\cap\bar B_s=\{c_2,\ldots,c_m,a_{i_m+1}\}$, $\bar{a}_{i_1-1}=a_{i_1-1}<a_{i_1}=b_{j_1}=\bar b_{j_1-1}$, and $u^{-1}(\bar{a}_{i_m+1})=u^{-1}(a_{i_m+2})>u^{-1}(b_{j_m+1})=u^{-1}(\bar a_{i_m})$. We obtain $\widetilde A=(a_{i_1-1},a_{i_1})\cdot \bar A=(a_{i_1-1},a_{i_1})\cdot (a_{i_1},a_{i_1-1})\cdot A$ and $\widetilde B=(a_{i_m+1},b_{j_m+1})\cdot\bar B
    =(a_{i_m+1},b_{j_m+1})\cdot(b_{j_m+1},a_{i_m+1})\cdot B$.
    \item If $(A,B)$ lies in case \ref{caso3m}, then $(\barb,\bara)$ lies in case \ref{invcaso3} In fact, $\bar A_s\cap\bar B_s=\{c_2,\ldots,c_m,b_{j_m+1}\}$, $\bar{a}_{i_1-1}=a_{i_1}=b_{j_1}>b_{j_1-1}=\bar b_{j_1-1}$, and $u^{-1}(\bar{a}_{i_m+1})=u^{-1}(a_{i_m+1})<u^{-1}(b_{j_m+1})=u^{-1}(\bar a_{i_m})$. We obtain $\widetilde A=(b_{j_m+1},a_{i_m})\cdot\bara=(b_{j_m+1},a_{i_m})\cdot(a_{i_m},b_{j_m+1})\cdot A$ and $\widetilde B=(b_{j_2},b_{j_1})\cdot\bar B=(b_{j_2},b_{j_1})\cdot(b_{j_1},b_{j_2})\cdot B$.
    \item If $(A,B)$ lies in case \ref{caso4m}, then $(\barb,\bara)$ lies in case \ref{invcaso1} In fact, $\bar A_s\cap\bar B_s=\{c_2,\ldots,c_m,a_{i_m+1}\}$, $\bar{a}_{i_1-1}=a_{i_1}=b_{j_1}>b_{j_1-1}=\bar b_{j_1-1}$, and $u^{-1}(\bar{a}_{i_m+1})=u^{-1}(a_{i_m+2})>u^{-1}(a_{i_m+1})=u^{-1}(\bar a_{i_m})$. We obtain $\widetilde A=\bara=A$ and $\widetilde B=(b_j,a_{i+1})\cdot\barb\cdot(b_j,a_{i+1})=(a_{i_m+1},b_{j_m+1})\cdot(b_{j_2},b_{j_1})\cdot (b_{j_m+1},a_{i_m+1})\cdot (b_{j_1},b_{j_2}) \cdot B$.
\end{itemize}
Hence, $\psi=\phi^{-1}$.
\end{proof}

\begin{thm}\label{thm11}
Let $W$ be a Coxeter group of type $A_{n-1}$, and let $u,v \in W$. Then the following hold:
\begin{enumerate}
\item 
\label{thm11_1}
$DS(z_n,z_1)=DS(z_1,z_n)$;
\item 
\label{thm11_2}
$DS(z^n, z^1)=DS(z^1, z^n)$;
\item 
\label{thm11_3}
$DS(z_n,z^n)=DS(z^n,z_n)$;
\item 
\label{thm11_4}
$DS(z^1,z_1)=DS(z_1,z^1)$;
\end{enumerate}
\end{thm}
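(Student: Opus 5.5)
We derive part \eqref{thm11_1} directly from Lemma~\ref{lemma biezione1}, and then obtain parts \eqref{thm11_2}--\eqref{thm11_4} from \eqref{thm11_1} via the symmetries of $\mathfrak{S}_n$ that permute the four standard hypercube decompositions.

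For \eqref{thm11_1}, we use the discussion preceding Lemma~\ref{lemma biezione1}, which is based on Theorem~\ref{char}. The pairs $(p,b)$ with $p\in W^{z_n}_{[u,v]}$ and $b\in W^{z_1\vee p}_{[p,v]}$ are in bijection with $\mathcal D$, via $p=A\cdot u$ and $b=B\cdot A\cdot u$. Symmetrically, the pairs $(q,b)$ contributing to $DS(z_1,z_n)$ are in bijection with $\bar{\mathcal D}$, via $q=\barb\cdot u$ and $b=\bara\cdot\barb\cdot u$. It remains to see that the first coordinates of the multiset entries match. By Theorem~\ref{char}, a shortcut $p=Cu$ obtained from a cycle $C$ moving $|C_s|$ elements satisfies $d(u,p)=|C_s|-1$: a path through the transpositions $(a_{i},a_{i+1})$ has that length, and no shorter path exists. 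We may take this from \cite{BM}, where the shortcut sum $q^{d(u,p)}$ is computed. Hence
\[
d(u,p)+d(p,b)=|A_s|+|B_s|-2 \quad\text{and}\quad d(u,q)+d(q,b)=|\barb_s|+|\bara_s|-2 .
\]
The bijection $\phi$ of Lemma~\ref{lemma biezione1} preserves $B\cdot A=\bara\cdot\barb$, so it preserves $b$, and it preserves $|A_s|+|B_s|$. Therefore $\phi$ induces a multiplicity-preserving bijection between the entries $(d,b)$ of $DS(z_n,z_1)$ and those of $DS(z_1,z_n)$.

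For the remaining parts, we use the automorphisms of the Bruhat order $x\mapsto x^{-1}$ and $x\mapsto w_0xw_0$; compositions of these are also automorphisms. Each of them sends the interval $[u,v]$ to an isomorphic interval and preserves distances in $\mathcal B(W)$. Since the maps are poset automorphisms, they also preserve hypercube decompositions, joins $z\vee x$, and shortcuts, so they transport $DS$ multisets. They act on the standard decompositions as follows:
\begin{itemize}
\item inversion exchanges $W_{S\setminus\{s\}}v$ with $v^{-1}W_{S\setminus\{s\}}$, so it swaps $z_n\leftrightarrow z^n$ and $z_1\leftrightarrow z^1$;
\item conjugation by $w_0$ exchanges $s_1\leftrightarrow s_{n-1}$, so it swaps $z_n\leftrightarrow z_1$ and $z^n\leftrightarrow z^1$.
\end{itemize}
Applying inversion to \eqref{thm11_1} for the interval $[u^{-1},v^{-1}]$ gives \eqref{thm11_2}. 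Part \eqref{thm11_3} concerns the pair $(z_n,z^n)$, i.e.\ a left-coset and a right-coset decomposition, which is not in the orbit of $\{z_n,z_1\}$: that orbit contains only the pairs $\{z_n,z_1\}$ and $\{z^n,z^1\}$. So \eqref{thm11_3} needs its own argument. Using the shortcut descriptions 1 and 3 of Theorem~\ref{char}, we obtain an analogous pair of sets $\mathcal D',\bar{\mathcal D}'$ of cycle pairs. Here $p=Au$ and $b$ is obtained by left multiplication by the cycle in values $(\alpha,\dots)$ from part 3, with conditions on positions rewritten in terms of $p$. We then construct a bijection of the same block-by-block type as $\phi$, preserving the product and the total number of moved elements. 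Part \eqref{thm11_4} then follows from \eqref{thm11_3} by conjugation with $w_0$, which sends $(z_n,z^n)$ to $(z_1,z^1)$.

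The main obstacle is \eqref{thm11_3}. In that case the two cycles have the mixed forms $(n,\dots)$ and $(\alpha,\dots)$ with $\alpha=v(n)$, and both live in the same alphabet of values. I would first rewrite the conditions via $u\mapsto u^{-1}$ on one factor, so that both conditions become monotonicity conditions on a single sequence, and then mimic the case analysis (the four cases, the blocks of overlap, and the inverse via a $w_0$-symmetry) from the proof of Lemma~\ref{lemma biezione1}. A secondary point to check carefully is the distance identity $d(u,Cu)=|C_s|-1$ for shortcuts.
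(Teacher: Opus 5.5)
Your proposal is correct and follows the paper's proof: (1) from Lemma~\ref{lemma biezione1}, (2) by inversion, (3) by rerunning the lemma's construction with $1$ replaced by $\alpha=v(n)$, and (4) from (3) by conjugation with $w_0$. Theorem~\ref{char} already presents both shortcut types as left multiplication by cycles in values, so the preliminary rewriting you plan for (3) is not needed. Your explicit check that $d(u,Cu)=|C_s|-1$, which turns $|A_s|+|B_s|=|\barb_s|+|\bara_s|$ into equality of the first coordinates of the $DS$ multisets, is a step the paper leaves implicit.
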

\begin{proof}
\begin{enumerate}[leftmargin=*]
    \item It follows from the existence of a bijection between  $\mathcal{D}$ and $\mathcal{\bar D}$, which is the content of  Lemma~\ref{lemma biezione1}. 
\item It is obtained from (\ref{thm11_1}) by considering the poset isomorphism $x\mapsto x^{-1}$ from $[u,v]$ to $[u^{-1},v^{-1}]$. In fact, $b$ is a $(z^n,z^1)$-double shortcut for $[u,v]$ if and only if $b^{-1}$ is a $(\hat z_n,\hat z_1)$-double shortcut for $[u^{-1},v^{-1}]$, where $\hat z_n=\min([u^{-1},v^{-1}]\cap W_{S\setminus\{s_{n-1}\}}v^{-1})$ and $\hat z_1=\min([u^{-1},v^{-1}]\cap W_{S\setminus\{s_{1}\}}v^{-1})$.
\item It is obtained from (\ref{thm11_1}), but replacing 1 with $\alpha:=v(n)$ in the definition of the sets $\mathcal{D}$ and $\mathcal{\bar D}$, meaning that we should consider the sets \[
\mathcal{D}' := \left\{ (A,B) : 
\parbox{7.8cm}{
    $A$ and $B$ are cycles $(n, a_k, a_{k-1},\ldots, a_1)$ and $(\alpha, b_1, b_2,\ldots, b_h)$ 
    satisfying \ref{2shzw1}, \ref{2shzw2}, \ref{2bshzw1}, \ref{2bshzw2}
} \right\},
\]
where \begin{enumerate}[label=(\arabic*)]
\item \label{2shzw1}
$1\leq a_1<a_2<\ldots <a_k<n$,
\item \label{2shzw2} $v^{-1}(n)=u^{-1}(a_1)<u^{-1}(a_2)<\ldots <u^{-1}(a_k)<u^{-1}(n)$,
\item\label{2bshzw1} $\alpha<b_1<\ldots<b_{h-1}<b_h < n$,
\item\label{2bshzw2} $u^{-1}\cdot A^{-1}(\alpha)<u^{-1}\cdot A^{-1}(b_1)<\ldots<u^{-1}\cdot A^{-1}(b_{h-1})<u^{-1}\cdot A^{-1}(b_h)=n$, 
\end{enumerate}
and
\[
\mathcal{\bar D}' := \left\{ (\barb,\bara) : 
\parbox{8.1cm}{
     $\bar{B}$ and $\bar{A}$ are cycles  $(\alpha,\bar b_1,\bar b_{2},\ldots,\bar b_{\bar h})$ and $(n,\bar a_{\bar k}, \bar a_{\bar k-1}, \ldots,\bar a_1)$ satisfying \ref{2mshzw1}, \ref{2mshzw2},  \ref{2mbshzw1}, \ref{2mbshzw2}}
     \right\},
\]
where \begin{enumerate}[label=(\alph*)]
\item \label{2mshzw1}
$\alpha<\bar b_1<\ldots<\bar b_{\bar{h}-1}<\bar b_{\bar h}\leq n$,
\item \label{2mshzw2}
$u^{-1}(\alpha)<u^{-1}(\bar b_1)<\ldots<u^{-1}(\bar b_{{\bar h}-1})<u^{-1}(\bar b_{\bar h})=v^{-1}(1)$. 

\item \label{2mbshzw1}
$1\leq \bar a_1<\bar a_2<\ldots <\bar a_{\bar k}<n$, $\bar a_i\neq \alpha$,
\item \label{2mbshzw2}
$v^{-1}(n)=u^{-1}\cdot \barb^{-1}(\bar a_1)<u^{-1}\cdot \barb^{-1}(\bar a_2)<\ldots <u^{-1}\cdot \barb^{-1}(\bar a_{\bar k})<u^{-1}\cdot \barb^{-1}(n)$, 
\end{enumerate}
and then proceed exactly as before, again replacing 1 with $\alpha$ inside the proof of the lemma.
                
\item 
It is obtained from (\ref{thm11_3}) by considering the poset isomorphism $x\mapsto w_0xw_0$ from $[u,v]$ to $[w_0uw_0,w_0vw_0]$. In fact, $b$ is a $(z_1,z^1)$-double shortcut for $[u,v]$ if and only if $w_0bw_0$ is a $(\hat z_n,\hat z^n)$-double shortcut for $[w_0uw_0,w_0vw_0]$, where $\hat z_n=\min([w_0uw_0,w_0vw_0]\cap W_{S\setminus\{s_{n-1}\}}w_0vw_0)$ and $\hat z^n=\min([w_0uw_0,w_0vw_0]\cap w_0vw_0W_{S\setminus\{s_{n-1}\}})$.
\end{enumerate}
\end{proof}

The following result implies that Conjecture~\ref{EMtrivial} holds for those intervals having no hypercube decompositions apart from the standard ones.
\begin{cor}\label{cor}
Let $u,v\in W$. 
Then standard hypercube decompositions of $[u,v]$ belong to the same equivalence class.
\end{cor}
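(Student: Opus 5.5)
The corollary follows from Theorem~\ref{thm11} once we view the symmetric relations it provides as edges of a graph. The equivalence relation is the transitive closure of $z\sim z'$ whenever $DS(z,z')=DS(z',z)$, taken on the set of amazing hypercube decompositions of $[u,v]$. So it suffices to show two things: that all four standard decompositions $z_n,z_1,z^n,z^1$ lie in this set, and that the graph whose edges are the pairs given by Theorem~\ref{thm11} is connected on these four vertices.

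The first point is already recorded in the paper. Standard hypercube decompositions are amazing hypercube decompositions, and indeed amazing $R$-elements, by \cite{EM2}, Remark~3.2. Hence each of $z_n,z_1,z^n,z^1$ is an element of the set on which the relation is defined. Some of them may coincide, but this causes no difficulty, since the relation is reflexive on equal elements.

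For the second point, Theorem~\ref{thm11} gives four symmetric pairs:
\[
z_n\sim z_1,\qquad z^n\sim z^1,\qquad z_n\sim z^n,\qquad z^1\sim z_1 .
\]
These edges form the cycle $z_1 - z_n - z^n - z^1 - z_1$, which passes through all four vertices, so the graph is connected. Concretely, $z_1\sim z_n$ by part (\ref{thm11_1}), $z_n\sim z^n$ by part (\ref{thm11_3}), and $z^n\sim z^1$ by part (\ref{thm11_2}); part (\ref{thm11_4}) is even redundant here. By transitivity, all standard hypercube decompositions lie in a single equivalence class.

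There is no real obstacle in this step; the substantive work sits in Lemma~\ref{lemma biezione1} and Theorem~\ref{thm11}. The only care needed is to confirm that the relation is applied within the correct set, namely the amazing hypercube decompositions of the same interval $[u,v]$, which the cited remark guarantees. It is also worth remarking that, as a consequence, Conjecture~\ref{EMtrivial} holds for any interval whose only hypercube decompositions are the standard ones.
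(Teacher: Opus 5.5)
Your proposal is correct and follows the paper's argument: transitivity applied to the symmetric pairs from Theorem~\ref{thm11}, after noting that standard hypercube decompositions are amazing. The only difference is that you use parts (1), (3), (2) to get the chain $z_1\sim z_n\sim z^n\sim z^1$, whereas the paper uses parts (1), (3), (4) to get $z^n\sim z_n\sim z_1\sim z^1$; either set of three relations connects all four elements.
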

\begin{proof}
    This is an easy consequence of Theorem~\ref{thm11}. By (\ref{thm11_1}), we have that $DS(z_n,z_1)=DS(z_1,z_n)$; hence $z_n\sim z_1$. By (\ref{thm11_3}), we have that $DS(z_n,z^n)=DS(z^n,z_n)$; hence $z_n\sim z^n$. By (\ref{thm11_4}), we have that $DS(z^1,z_1)=DS(z_1,z^1)$; hence $z_1\sim z^1$. The claim then follows by transitivity.
\end{proof}

\end{document}